\def\B{\mathcal{B}}
\def\C{\mathcal{C}}
\def\IF{\ensuremath{\mathbb F}}
\newcommand{\bigperp}{%
  \mathop{\mathpalette\bigp@rp\relax}%
  \displaylimits
}
\newcommand{\bigp@rp}[2]{%
  \vcenter{
    \m@th\hbox{\scalebox{\ifx#1\displaystyle2.1\else1.5\fi}{$#1\perp$}}
  }%
}
\newtheorem{theorem}{Theorem}[section]
\newtheorem{corollary}{Corollary}[section]
\newtheorem{definition}{Definition}[section]
\newtheorem{example}{Example}[section]
\newtheorem{lemma}{Lemma}[section]
\newtheorem{remark}{Remark}[section]
\begin{document}

\title{Decomposition of linear operators on pre-euclidean spaces by means of graphs}

\thanks{The second author was supported by the Centre for Mathematics of the University of Coimbra - UIDB/00324/2020, funded by the Portuguese Government through FCT/MCTES. Third and fourth author is supported by the PCI of the UCA `Teor\'\i a de Lie y Teor\'\i a de Espacios de Banach' and by the PAI with project number FQM298. Third author also is supported by the project FEDER-UCA18-107643 and by the Spanish project `Algebras no conmutativas y de caminos de Leavitt. Algebras de evolución. Estructuras de Lie y variedades
de Einstein'.}

\author[H. Abdelwahab]{Hani Abdelwahab}
\address{Hani Abdelwahab.
\newline \indent Mansoura University, Faculty of Science, Department of Mathematics (Egypt).}
\email{{\tt haniamar1985@gmail.com}}

\author[E. Barreiro]{Elisabete Barreiro}
\address{Elisabete~Barreiro. \newline \indent University of Coimbra, CMUC, Department of Mathematics,  FCTUC,
Largo D. Dinis
3000-143 Coimbra (Portugal).}
\email{{\tt mefb@mat.uc.pt}}

\author[A.J. Calder\'on]{Antonio J. Calder\'on}
\address{Antonio J. Calder\'on. \newline \indent University of Cádiz, Department of Mathematics, Puerto Real (Spain).}
\email{{\tt ajesus.calderon@uca.es}}

\author[J.M. Sánchez]{José María Sánchez}
\address{José María Sánchez. \newline \indent University of Cádiz, Department of Mathematics, Puerto Real (Spain).}
\email{{\tt txema.sanchez@uca.es}}


\thispagestyle{empty}

\begin{abstract}
In this work we study a linear operator $f$  on a pre-euclidean space $\mathcal{V}$ by using properties of a corresponding graph. Given a basis $\B$ of $\mathcal{V}$, we present a decomposition of $\mathcal{V}$ as an orthogonal direct sum of certain linear subspaces $\{U_i\}_{i \in I}$, each one admitting a basis inherited from $\B$, in such way that $f = \sum_{i \in I}f_i$, being each $f_i$ a linear operator satisfying certain conditions respect with $U_i$.   Considering new hypothesis, we assure the existence of an isomorphism between the graphs associated to $f$ relative to two different bases. We also study the minimality of $\mathcal{V}$ by using the  graph associated to $f$ relative to $\B$.

\bigskip

{\it 2020MSC}: 47A65, 47B37, 05C90.

{\it Keywords}: Linear operators, pre-euclidean spaces, graph theory.
\end{abstract}

\maketitle

\section{Introduction}
\label{sec1}

This paper is motivated by the following problem: If we consider a family $\{f_i : \mathcal{V}_i \to \mathcal{V}_i\}_{i\in I}$ of linear operators, where any $\mathcal{V}_i$ is a pre-euclidean space, then we can construct in a natural way a new linear operator $$f: \bigperp_{i \in I} \mathcal{V}_i \to 
 \bigperp_{i \in I} \mathcal{V}_i$$ as $f=\sum_{i\in I} f_i$,   where $\mathcal{V} := \bigperp_{i\in I} \mathcal{V}_i$ is the pre-euclidean space defined by  componentwise operations. But, what about the converse? That is, if $$f : \mathcal{V} \to \mathcal{V}$$ is a linear operator on a pre-euclidean space. Can we find a family $\{\mathcal{V}_i\}_{i\in I}$ of pre-euclidean spaces and a family of linear operators $\{f_i : \mathcal{V}_i \to \mathcal{V}_i\}_{i\in I}$ in such a way that 
\begin{equation}\label{de} 
\mathcal{V} = \bigperp_{i \in I}  \mathcal{V}_i \hspace{0.3cm}  {\rm and} \hspace{0.3cm}  f = \sum_{i\in I} f_i?
\end{equation}

The aim of the present work is to study this problem by giving a positive answer. A pre-euclidean space is simply a linear space provided with a bilinear form, hence our work covers a wide range of structures. 
We also note that we use, as a tool for our study, techniques of graphs. This allows us to get the decomposition \eqref{de} in an easy way, just by looking to the graph associated to $f$ (and a fixed basis). 
Observe that this result gives us the opportunity of recover a (possible) large linear operator $f : \mathcal{V} \to \mathcal{V}$, from a family of easier linear operators $f_i : \mathcal{V}_i \to \mathcal{V}_i$, in a  visual and computable way, what we hope will be useful in any area dealing with linear operators on a vector space endowed with a bilinear map. In recent years, the use of graphs has increased in order to apply them in other areas \cite{Leibniz_grafos, Yografos,apply6, apply8, apply13}. Concretely, the applications of the graphs for the study of linear operators and algebras \cite{graph_oper1, graph_oper2, graph_oper3, graph_oper4, graph_oper5, graph_oper6, graph_oper7}. We highlight that this topic is currently very active as some recently published articles show \cite{apply1, apply2, apply3, apply4, apply5, apply7, apply9, apply10, apply11, apply12}.

More in detail, for a linear operator $f : \mathcal{V} \to \mathcal{V}$  on a pre-euclidean space $\mathcal{V}$ with fixed a basis $\B$, we obtain its decomposition as an orthogonal direct sum of certain linear subspaces $\{U_i\}_{i \in I}$, each one admitting a basis inherited from $\B$, in such way that $f$ is decomposed as $f = \sum_{i \in I}f_i$, being each $f_i$ a linear operator satisfying certain conditions relative to $U_i$. Also, for the linear operator $f$, we present conditions in order to guarantee the existence of an isomorphism between the graphs associated to $f$ relative to two different bases of $\mathcal{V}$. Finally, we analyze the minimality property for $\mathcal{V}$ by using the  graph associated to $f$ relative to $\B$.

The paper is organized as follows. Section \ref{Sect1} contains basic notions needed in the sequel. In Section \ref{Sect2} we associate a graph $\Gamma(f,\B)$ to any linear operator $f$ defined on a pre-euclidean space $(\mathcal{V},\langle \cdot, \cdot \rangle)$ with a fixed basis $\B$. In addition, we introduce the notion of $f$-indecomposable in order to give a characterization using the connectivity of $\Gamma(f,\B)$. In Section \ref{Sect3} we give the definition of $f$-equivalence under which the graphs $\Gamma(f,\B)$ and $\Gamma(f,\B')$ are isomorphic, for two different bases $\B $ and $ \B'$ of $\mathcal{V}$. Also, we relate this properties with the definition of equivalent decomposition for $f$. In Section \ref{Sect4} we analyze the minimality property for $\mathcal{V}$ by using $\Gamma(f,\B)$, the graph associated  to $f$ relative to $\B$. Finally, we present an entire conclusions section where we critically highlight our contribution, and identifying strengths and weaknesses and proposing paths for future research.  

\section{Basic definitions}
\label{Sect1}
 
Throughout this paper, $\mathbb{F}$ denotes an arbitrary field and all vector spaces are assumed to be arbitrary dimensional and over base field $\mathbb{F}$.

\begin{definition}\rm
A \emph{pre-euclidean space} is a pair $(\mathcal{V}, \langle \cdot , \cdot \rangle)$, where $\mathcal{V}$ is a $\mathbb{F}$-vector space and $\langle \cdot , \cdot \rangle : \mathcal{V} \times \mathcal{V} \to \mathbb{F}$ is a bilinear form.
\end{definition}

\begin{example}\rm
$\mathbb{R}$ endowed with the bilinear form $\langle \cdot, \cdot \rangle : \mathbb{R} \times \mathbb{R} \to \mathbb{R}$ given as $\langle x,y \rangle := \lambda xy$, for $x,y\in \mathbb{R}$ and fixed $\lambda \in \mathbb{R}$, is a pre-euclidean space.
\end{example}

A pre-euclidean space over the field $\mathbb{R}$ endowed with an scalar product is a pre-Hilbert space. Therefore, the results in this paper apply  to pre-Hilbert spaces.  A {\it pre-euclidean subspace} of $(\mathcal{V}, \langle \cdot , \cdot \rangle)$ is a linear subspace $U$ of $\mathcal{V}$ endowed with the bilinear form $\langle \cdot ,\cdot \rangle |_{U \times U}$.
Additionally, given two pre-euclidean spaces $(\mathcal{V}, \langle \cdot, \cdot \rangle_{\mathcal V})$ and $ (\mathcal{W}, \langle \cdot, \cdot \rangle_{\mathcal W})$, a {\it morphism} from $\mathcal{V}$ to $\mathcal{W}$ is a linear map $\phi : \mathcal{V} \to \mathcal{W}$ satisfying $\langle x,y \rangle_{\mathcal V} = \langle \phi(x),\phi(y) \rangle_{\mathcal W}$ for $x,y\in \mathcal{V}$. An {\em isomorphism} is a bijective morphism from $\mathcal{V}$ to $\mathcal{W}$.  More, an {\em automorphism} is an isomorphism from $\mathcal{V}$ to itself.

\begin{definition} \rm
Let $(\mathcal{V}, \langle \cdot , \cdot \rangle)$ be a pre-euclidean space.
\begin{enumerate}
\item[i.] We say that two elements $x,y \in \mathcal{V}$ are {\it orthogonal} if $\langle x,y \rangle = 0$.
\item[ii.] The vector subspaces $U$ and $W$ of $\mathcal{V}$ are {\it orthogonal} if $\langle u,w \rangle = 0$ for $u \in U, w \in W$. In this case, we denote by $\langle U,W \rangle = \{0\}$.
\item[iii.] $\mathcal{V}$ is an {\it orthogonal direct sum} of linear subspaces $U_i$  of $\mathcal{V}$, with $i \in I$, denoted as $$\mathcal{V} = \bigperp_{i \in I}U_i,$$ if $\mathcal{V}$ decomposes as a direct sum $\mathcal{V} = \bigoplus_{i \in I}U_i$ of linear subspaces $U_i$ such that $\langle U_i,U_j \rangle = \{0\}$ whenever $i\neq j$.
\end{enumerate}
\end{definition}

\section{Linear operator on a pre-euclidean space and graphs. Decomposition Theorem}
\label{Sect2}

We recall  that a (directed) graph is a pair $(V,E)$ where $V$ is a set of vertices and $E\subset V\times V$ a set of (directed) edges connecting the vertices.  
\begin{definition}\label{def_graph_associated}\rm
Let $f : \mathcal{V} \to \mathcal{V}$ be a linear operator on a pre-euclidean space $(\mathcal{V}, \langle \cdot , \cdot \rangle)$ with fixed basis $\mathcal{B}=\{e_i\}_{i \in I}$. The directed graph associated to $f$ relative to $\B$ is $\Gamma(f,\B) := (V,E)$, where $V := \B$ and
$$E := \Bigl\{(e_i,e_j) \in V \times V : \{\langle e_i,e_j \rangle , \langle e_j,e_i\rangle \} \neq \{0\} \mbox{ or } f(e_i) = \sum_j \lambda_j e_j \mbox{ for some  } 0 \neq \lambda_j \in \mathbb{F} \Bigr\}.$$
We say that $\Gamma(f,\B)$ is the {\it (directed) graph associated to} $f$ relative to basis $\B$.
\end{definition}

\begin{example}  \label{example1} \rm
Let $(\mathcal{V},\langle \cdot ,\cdot \rangle )$ be the pre-euclidean space over $\mathbb{R}$ with a fixed basis $\B = \{e_1, e_2, \ldots , e_5\}$  such that $\langle e_2,e_5 \rangle =7$ and the rest zero. Let $f : \mathcal{V} \to \mathcal{V}$ be the linear operator defined as $$f(e_1) = f(e_3) = f(e_5) := e_1+2e_3+e_5, \hspace{1cm} f(e_2) = f(e_4) := -e_4.$$

 
\noindent Then the associated graph $\Gamma(f,\B)$ is:
 
\bigskip

\begin{center}
\begin{tikzpicture}\label{GRAPH1}[scale=0.8, transparency group=knockout]
\begin{scope}[every node/.style={circle,thick,draw}]
\node[shape=circle,fill={rgb,255:gray,300; white,50},draw=black] (E1) at (-1.5,0) {$e_1$};
\node[shape=circle,fill={rgb,255:gray,300; white,50},draw=black] (E5) at (1.5,0) {$e_5$};
\node[shape=circle,fill={rgb,255:gray,300; white,50},draw=black] (E3) at (0,2) {$e_3$};
\node[shape=circle,fill={rgb,255:gray,300; white,50},draw=black] (E2) at (4,0) {$e_2$};
\node[shape=circle,fill={rgb,255:gray,300; white,50},draw=black] (E4) at (6,0) {$e_4$};

\draw[<->] (E1) edge (E5);
\draw[<->] (E5) edge (E3);
\draw[<->] (E1) edge (E3);
\draw[->] (E2) edge (E4);
\draw[<->] (E2) edge (E5);

\path (E1) edge [loop left] (E1);
\path (E3) edge [loop right] (E3);
\path (E5) edge [loop below] (E5);
\path (E4) edge [loop right] (E4);
\end{scope}
\end{tikzpicture}
\end{center}
\end{example}

\begin{example}\label{new_exam}\rm
Let $(\mathcal{V},\langle \cdot, \cdot \rangle )$ be the pre-euclidean space over $\mathbb{R}$ with basis $\B:=\{v_1,v_2,v_3,v_4,v_5\}$ and bilinear form defined as $$\langle v_4,v_3 \rangle = \langle v_1,v_5 \rangle := 1, \hspace{1cm} \langle v_4,v_2 \rangle = \langle v_3,v_5 \rangle := 3.$$ Let $f : \mathcal{V} \to \mathcal{V}$ be the linear operator given as $$f(v_1)=f(v_3)=f(v_2) := 2v_1+2v_2+v_3, \hspace{1cm} f(v_4)=v_4.$$

\noindent So the graph $\Gamma(f,\B)$ is:

\bigskip

\begin{center}
\begin{tikzpicture}[scale=0.8, transparency group=knockout]
\begin{scope}[every node/.style={circle,thick,draw}]
\node[shape=circle,fill={rgb,255:gray,300; white,50},draw=black] (F) at (0,4) {$v_2$};
\node[shape=circle,fill={rgb,255:gray,300; white,50},draw=black] (H) at (2,2) {$v_3$};
\node[shape=circle,fill={rgb,255:gray,300;white,50},draw=black] (E) at (4,4) {$v_1$};
\node[shape=circle,fill={rgb,255:gray,300;white,50},draw=black] (P) at (0,0) {$v_4$};
\node[shape=circle,fill={rgb,255:gray,300;white,50},draw=black] (Q) at (4,0) {$v_5$};

\draw[<->] (E) edge (Q);
\draw[<->] (E) edge (H);
\draw[<->] (E) edge (F);
\draw[<->] (H) edge (F);
\draw[<->] (H) edge (P);
\draw[<->] (H) edge (Q);
\draw[<->] (P) edge (F);

\path (E) edge [loop right] (E);
\path (F) edge [loop left] (F);
\path (H) edge [loop above] (H);
\path (P) edge [loop left] (P);

\end{scope}
\end{tikzpicture}
\end{center}
\end{example}

Given two vertices $v_i,v_j \in V$, an \emph{undirected path} from $v_i$ to $v_j$ is a sequence of vertices $(v_{i_1},\dots,v_{i_n})$ with $v_{i_1}=v_i$,  $v_{i_n}=v_j$ and such that either $(v_{i_r}, v_{i_{r+1}}) \in E$ or $(v_{i_{r+1}}, v_{i_r}) \in E$, for $1\leq r \leq n-1$.
We may introduce an  equivalence relation in $V$: we say that $v_i$ is related to $v_j$ in $V$, and denote  $v_i\sim v_j$, if either $v_i= v_j$ or there exists an undirected path from $v_i$ to $v_j$. In this case, we assert that $v_i$ and $v_j$ are \emph{connected} and the equivalence class of $v_i$, denoted by $[v_i] \in V/\sim$, corresponds to a connected component $\C_{[v_i]}$ of the graph $\Gamma(f,\B)$. Therefore
\begin{equation}\label{graphdec}
\Gamma(f,\B)=\dot{\bigcup_{[v_i]\in V/\sim}} \C_{[v_i]}.
\end{equation}

\medskip

\noindent We can also  associate to any $\C_{[v_i]}$ the linear subspace
\begin{equation}
{\mathcal V}_{\C_{[v_i]}} := \bigperp_{v_j\in [v_i]}\IF v_j.\label{linsub}
\end{equation}

\begin{definition}\rm
Let $(\mathcal{V}, \langle \cdot , \cdot \rangle)$ be a pre-euclidean space with basis $\mathcal{B}$. A linear subspace $U$ of ${\mathcal V}$ admits a basis $\B'$ {\it inherited from} $\B$ if $\B'$ is a basis of $U$ satisfying $\B' \subset \B$.
\end{definition}

\begin{definition}\rm
Let $f : \mathcal{V} \to \mathcal{V}$ be a linear operator on a pre-euclidean space $(\mathcal{V}, \langle \cdot , \cdot \rangle)$ with basis $\mathcal{B}=\{e_i\}_{i \in I}$. The space ${\mathcal V}$  is {\it $f$-decomposable} with respect to   $\B$ if ${\mathcal V} = U_1 \perp U_2$, being $U_1,U_2$ non-zero linear subspaces admitting each one a basis inherited from $\B$ and $f(U_1) \subset U_1$, $f(U_2)\subset U_2$.  Otherwise, ${\mathcal V}$ is said {\it $f$-indecomposable} with respect to $\B$.
\end{definition}

\begin{example}\rm
The pre-euclidean space over $\mathbb{R}$ of Example \ref{example1} is $f$-indecomposable with respect $\B$  
\end{example}

\begin{example}\rm
Let $\mathcal{V}$ be the  pre-euclidean space defined by: the $5$-dimensional $\mathbb{C}$-vector space with basis $\B:=\{e_1,e_2,e_3,e_4,e_5\}$ and bilinear form defined as $$\langle e_1, e_3 \rangle := 4i, \hspace{2cm} \langle e_4, e_5 \rangle := 2-11i,$$ and the rest zero. We consider the linear operator $f : \mathcal{V} \to \mathcal{V}$ given as $$f(e_1):=2e_1-e_2, \hspace{2cm} f(e_2):=e_3,$$ and zero on the rest. Then by denoting $U_1,U_2$ the $\mathbb{C}$-linear subspaces of $\mathcal{V}$ with bases $\{e_1,e_2,e_3\}$, $\{e_4,e_5\}$, respectively, we easily see that $${\mathcal V} = U_1 \perp U_2 $$ and ${\mathcal V}$ is $f$-decomposable with respect to $\B$. The associated graph $\Gamma(f,\B)$ is:

\bigskip

\begin{center}
\begin{tikzpicture}\label{GRAPH1B}[scale=0.8, transparency group=knockout]
\begin{scope}[every node/.style={circle,thick,draw}]
\node[shape=circle,fill={rgb,255:gray,300; white,50},draw=black] (E1) at (0,2) {$e_1$};
\node[shape=circle,fill={rgb,255:gray,300; white,50},draw=black] (E2) at (-1.5,0) {$e_2$};
\node[shape=circle,fill={rgb,255:gray,300; white,50},draw=black] (E3) at (1.5,0) {$e_3$};
\node[shape=circle,fill={rgb,255:gray,300; white,50},draw=black] (E4) at (4,0) {$e_4$};
\node[shape=circle,fill={rgb,255:gray,300; white,50},draw=black] (E5) at (6,0) {$e_5$};

\draw[->] (E1) edge (E2);
\draw[<->] (E1) edge (E3);
\draw[->] (E2) edge (E3);
\draw[<->] (E4) edge (E5);

\path (E1) edge [loop left] (E1);
\end{scope}
\end{tikzpicture}
\end{center}
\end{example}

\noindent A graph $(V,E)$  is \emph{connected} if  any two vertices are connected.   Equivalently, a graph is connected if and only if for every partition of its vertices into two non-empty sets, there is an edge with an endpoint in each set. 

\begin{theorem} \label{f-ind}
Let $f : \mathcal{V} \to \mathcal{V}$ be a linear operator on a pre-euclidean space $(\mathcal{V}, \langle \cdot , \cdot \rangle)$ with basis $\mathcal{B}=\{e_i\}_{i \in I}$. Then the following statements are equivalent.
\begin{enumerate}
\item[i.] The graph $\Gamma(f,\B)$ is connected.
\item[ii.] $\mathcal{V}$ is $f$-indecomposable with respect to $\B$. 
\end{enumerate}
\end{theorem}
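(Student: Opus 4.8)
The plan is to prove the equivalence via its two contrapositives, translating in each direction between a partition of the vertex set $\B$ with no connecting edges and a decomposition $\mathcal V = U_1 \perp U_2$ into non-zero $f$-invariant subspaces with bases inherited from $\B$. The bridge in both directions is the same dictionary: the first clause in the definition of $E$ records non-orthogonality of basis vectors, and the second clause records which basis vectors occur with non-zero coefficient in the images $f(e_i)$.

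First I would assume $\Gamma(f,\B)$ is not connected and write $\B = \B_1 \cup \B_2$ as a disjoint union with $\B_1,\B_2$ non-empty and no edge of $\Gamma(f,\B)$ joining a vertex of $\B_1$ to a vertex of $\B_2$ (possible by the characterization of disconnectedness recalled just before the statement). Set $U_k := \spans(\B_k)$ for $k=1,2$. Since $\B$ is a basis, $\mathcal V = U_1 \oplus U_2$, each $U_k$ is non-zero and admits the basis $\B_k \subset \B$ inherited from $\B$. The absence of an edge $(e_i,e_j)$ with $e_i \in \B_1$, $e_j \in \B_2$ forces $\langle e_i, e_j \rangle = \langle e_j, e_i \rangle = 0$ via the first clause, hence $\langle U_1, U_2 \rangle = \{0\}$ by bilinearity, so $\mathcal V = U_1 \perp U_2$. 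The same absence of edges, read through the second clause, says no $e_j \in \B_2$ occurs with non-zero coefficient in $f(e_i)$ for $e_i \in \B_1$; by uniqueness of coordinates in $\B$ this gives $f(e_i) \in U_1$, so $f(U_1) \subset U_1$ by linearity, and symmetrically $f(U_2) \subset U_2$. Thus $\mathcal V$ is $f$-decomposable with respect to $\B$, which is the contrapositive of (ii) $\Rightarrow$ (i).

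Conversely I would assume $\mathcal V$ is $f$-decomposable, $\mathcal V = U_1 \perp U_2$ with $U_1,U_2 \neq \{0\}$, $f(U_k) \subset U_k$, and each $U_k$ with a basis $\B_k \subset \B$ inherited from $\B$. Since the sum is direct, $\B_1 \cup \B_2$ is linearly independent and spans $\mathcal V$, hence is a basis contained in the basis $\B$; therefore $\B_1 \cup \B_2 = \B$ and $\B_1 \cap \B_2 = \emptyset$ (a common vector would lie in $U_1 \cap U_2 = \{0\}$). I then check that no edge joins $\B_1$ to $\B_2$: for $e_i \in \B_1$ and $e_j \in \B_2$, both $\langle e_i, e_j \rangle$ and $\langle e_j, e_i \rangle$ vanish because $U_1 \perp U_2$; moreover $f(e_i) \in U_1 = \spans(\B_1)$, so by uniqueness of coordinates the coefficient of $e_j$ in $f(e_i)$ is $0$, and symmetrically the coefficient of $e_i$ in $f(e_j)$ is $0$. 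Hence neither $(e_i,e_j)$ nor $(e_j,e_i)$ lies in $E$; since $\B_1,\B_2$ are non-empty, $\Gamma(f,\B)$ is disconnected, the contrapositive of (i) $\Rightarrow$ (ii).

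The argument is essentially bookkeeping, and the only point requiring care is that the edge relation coming from $f$ is \emph{not} symmetric: in each direction one must separately verify the vanishing of the relevant coefficient in $f(e_i)$ and in $f(e_j)$. It is precisely this — together with the two-sided orthogonality $\langle U_1, U_2\rangle = \{0\}$ — that makes the \emph{undirected} connectivity of $\Gamma(f,\B)$ the correct notion; if one used directed paths the statement would fail. A secondary point worth making explicit, for the arbitrary-dimensional setting, is the step from ``$\B_1 \cup \B_2$ is a basis contained in the basis $\B$'' to ``$\B_1 \cup \B_2 = \B$'', which uses only that a spanning subset of a basis must be the whole basis.
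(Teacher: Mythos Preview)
Your proof is correct and follows essentially the same approach as the paper: both directions translate between a vertex partition with no crossing edges and an orthogonal $f$-invariant splitting with inherited bases. The only cosmetic difference is in the direction (i) $\Rightarrow$ (ii): the paper assumes connectedness and a decomposition, traces an undirected path from $\B_1$ to $\B_2$, and locates a ``crossing'' edge to derive a contradiction, whereas you argue the contrapositive by directly checking that no edge can have endpoints in both $\B_1$ and $\B_2$ --- a slightly more streamlined presentation of the same idea.
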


\begin{proof}
First we suppose that the graph $\Gamma(f,\B)$ is connected. Let us assume that $\mathcal{V}$ is $f$-decomposable with respect to $\B$.
So $\mathcal{V}$ is the orthogonal direct sum $$\mathcal{V} = U_1 \perp U_2$$ of two linear subspaces $U_1$ and  $U_2$ admitting each one a basis $\B_1 := \{e_j : j \in J\}$ and $\B_2 := \{e_k : k \in K\}$, respectively, inherited from $\B$ such that $f(U_1)\subset U_1$ and $f(U_2)\subset U_2$. Hence $\B = \B_1 \; \dot{\cup} \; \B_2.$
Fix some $e_j \in \B_1$ and $e_k \in \B_2$. Since the graph $\Gamma(f,\B)$ is connected, it follows that $e_j$ is connected to $e_k$. So there exists an undirected path $$(e_j, v_{i_2}, \dots, v_{i_{n-1}}, e_k)$$ from $e_j$ to $e_k$.  From here, there are $e' = v_{i_s} \in \B_1$ and $e'' = v_{i_{s+1}} \in \B_2$ such that either $(e',e'') \in E$ or $(e'',e') \in E$. Since $\langle e',e'' \rangle = \langle e'',e'\rangle = 0$, we have either $P_{\mathbb{F}e''}\bigl(f(e') \bigr) \neq 0$ or $P_{\mathbb{F}e'}\bigl(f(e'')\bigr) \neq 0$, where $P_U : \mathcal{V} \to U$ is the projection of $\mathcal{V}$ onto the linear subspace $U$. Hence, we have either $f(U_1) \not\subset U_1$ or $f(U_2) \not\subset U_2$, in both cases it is a contradiction. Therefore, $\mathcal{V}$ is $f$-indecomposable with respect to $\B$.

Conversely, let us suppose that $\mathcal{V}$ is $f$-indecomposable   with  respect to $\B$ and $\Gamma(f,\B)$ is not connected. Then there exists a partition $\mathcal{B} = \B_1 \; \dot{
\cup} \; \B_2$ such that both $(x,y)$ and $ (y,x)$ are not in  $ E$, for any $x\in \B_1$ and $y\in \B_2$. Set $U_1:=\bigoplus_{x\in \B_1}\mathbb{F}x$ and $U_2:=\bigoplus_{y
\in \B_2}\mathbb{F}y$. Then we have $\langle x,y\rangle = \langle
y,x\rangle = 0$ for any $x \in \B_1$ and $y \in \B_2$. Moreover, $f(U_1)\subset U_1$ and $f(U_2)\subset U_2$. So $\mathcal{V}$ is the orthogonal direct sum 
\begin{equation*}
\mathcal{V} = U_1 \perp U_2,
\end{equation*}%
of two linear subspaces $U_1$ and $U_2$ admitting each one a basis $\B_1$ and $\B_2$, respectively, inherited from $\B$. Thus, $\mathcal{V}$ is $f$-decomposable with respect to $\B$, which is a contradiction.
\end{proof}

\begin{corollary} \label{f-inde}
Let $f : \mathcal{V} \to \mathcal{V}$ be a linear operator on a pre-euclidean space $(\mathcal{V}, \langle \cdot , \cdot \rangle)$ with basis $\mathcal{B}=\{e_i\}_{i \in I}$. Then, for each $[v_i]\in V/\sim $, the linear subspace $\mathcal{V}_{\mathcal{C}_{[v_i]}} := \bigperp_{v_j \in [v_i]}\mathbb{F}v_j$
of $\mathcal{V}$ is $f$-indecomposable  with  respect to $[v_i]$.   
\end{corollary}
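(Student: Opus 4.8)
The plan is to reduce the statement to Theorem \ref{f-ind}. The key observation is that $f$ restricts to a linear operator on each $\mathcal{V}_{\mathcal{C}_{[v_i]}}$, and that the graph of this restriction, computed relative to the basis $[v_i]$, is precisely the connected component $\mathcal{C}_{[v_i]}$ of $\Gamma(f,\B)$; since a connected component is a connected graph, Theorem \ref{f-ind} applies.

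First I would verify that $f\bigl(\mathcal{V}_{\mathcal{C}_{[v_i]}}\bigr) \subset \mathcal{V}_{\mathcal{C}_{[v_i]}}$. It is enough to check this on a basis vector $v_j \in [v_i]$. If $f(v_j) = \sum_k \lambda_k v_k$, then for every $k$ with $\lambda_k \neq 0$ the pair $(v_j,v_k)$ belongs to $E$ by Definition \ref{def_graph_associated}, so $v_k \sim v_j$ and hence $v_k \in [v_i]$; therefore $f(v_j) \in \mathcal{V}_{\mathcal{C}_{[v_i]}}$. Consequently $f|_{\mathcal{V}_{\mathcal{C}_{[v_i]}}}$ is a well-defined linear operator on the pre-euclidean subspace $\mathcal{V}_{\mathcal{C}_{[v_i]}} = \bigperp_{v_j\in[v_i]}\IF v_j$ (equipped with the restricted bilinear form), and $[v_i]$ is a basis of it.

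Next I would identify $\Gamma\bigl(f|_{\mathcal{V}_{\mathcal{C}_{[v_i]}}},[v_i]\bigr)$ with $\mathcal{C}_{[v_i]}$. The vertex sets agree by construction. For the edges, fix $v_j,v_k \in [v_i]$: the condition $\{\langle v_j,v_k\rangle,\langle v_k,v_j\rangle\}\neq\{0\}$ is unaffected by restricting the bilinear form, and the coefficient of $v_k$ in $f(v_j)$ is the same whether $f(v_j)$ is viewed in $\mathcal{V}$ or in $\mathcal{V}_{\mathcal{C}_{[v_i]}}$ (by the previous paragraph $f(v_j)$ already lies in the subspace). Hence an ordered pair of vertices of $[v_i]$ is an edge of $\Gamma\bigl(f|_{\mathcal{V}_{\mathcal{C}_{[v_i]}}},[v_i]\bigr)$ if and only if it is an edge of $\mathcal{C}_{[v_i]}$, so the two graphs coincide; and $\mathcal{C}_{[v_i]}$ is connected, being a connected component of $\Gamma(f,\B)$.

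Finally I would apply Theorem \ref{f-ind} to the operator $f|_{\mathcal{V}_{\mathcal{C}_{[v_i]}}}$ on $\mathcal{V}_{\mathcal{C}_{[v_i]}}$ with basis $[v_i]$: its associated graph is connected, so $\mathcal{V}_{\mathcal{C}_{[v_i]}}$ is $f$-indecomposable with respect to $[v_i]$. The only step requiring care is the edge bookkeeping of the third paragraph — ensuring that passing to the restriction neither creates nor destroys edges — but this is immediate once one knows $f$ preserves the subspace, so I do not anticipate any genuine obstacle.
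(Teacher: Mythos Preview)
Your argument is correct and is exactly the reasoning the paper intends: the corollary is stated without proof immediately after Theorem~\ref{f-ind}, and the implicit justification is precisely the one you give --- $f$ preserves each $\mathcal{V}_{\mathcal{C}_{[v_i]}}$, the graph of the restriction relative to $[v_i]$ is the connected component $\mathcal{C}_{[v_i]}$, and Theorem~\ref{f-ind} then yields $f$-indecomposability. The only cosmetic point is that Theorem~\ref{f-ind} literally gives $f|_{\mathcal{V}_{\mathcal{C}_{[v_i]}}}$-indecomposability, but since $f$ and its restriction agree on subspaces of $\mathcal{V}_{\mathcal{C}_{[v_i]}}$ this is the same condition, as you implicitly use.
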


\noindent We illustrate the result with this simple example.

\begin{example} \rm
Let $(\mathcal{V},\langle \cdot ,\cdot \rangle )$ be the pre-euclidean space over $\mathbb{R}$ with a fixed basis $\B = \{e_1, e_2, e_3, e_4\}$ such that $\langle e_1,e_3 \rangle =-5$, $\langle e_2,e_4 \rangle =1$, and zero on the rest. Let $f : \mathcal{V} \to \mathcal{V}$ be the linear operator defined as $$f(e_1) = f(e_3) := -2e_3, \hspace{1cm} f(e_2) = f(e_4) := 5e_2+e_4.$$
 
\noindent Then the associated graph $\Gamma(f,\B)$ is:
 
\bigskip

\begin{center}
\begin{tikzpicture}\label{GRAPH2}[scale=0.8, transparency group=knockout]
\begin{scope}[every node/.style={circle,thick,draw}]
\node[shape=circle,fill={rgb,255:gray,300; white,50},draw=black] (E1) at (-2,0) {$e_1$};
\node[shape=circle,fill={rgb,255:gray,300; white,50},draw=black] (E3) at (0,0) {$e_3$};
\node[shape=circle,fill={rgb,255:gray,300; white,50},draw=black] (E2) at (4,0) {$e_2$};
\node[shape=circle,fill={rgb,255:gray,300; white,50},draw=black] (E4) at (6,0) {$e_4$};

\draw[<->] (E1) edge (E3);
\draw[<->] (E2) edge (E4);

\path (E2) edge [loop left] (E2);
\path (E3) edge [loop right] (E3);
\path (E4) edge [loop right] (E4);
\end{scope}
\end{tikzpicture}
\end{center}

\bigskip

\noindent We have ${\mathcal V} = U_1 \perp U_2$ where $U_1,U_2$ are the subspaces with bases $\{e_1,e_3\}$ and $\{e_2,e_4\}$, respectively. Since the graph $\Gamma(f,\B)$ is not connected, we conclude $\mathcal{V}$ is $f$-decomposable  with respect to $\B$. However, $U_1$ and $U_2$ are $f$-indecomposable  with  respect to $\{e_1,e_3\}$ and $\{e_2,e_4\}$, respectively.
\end{example}

\begin{theorem} 
\label{f-indecomposable}
Let $f : \mathcal{V} \to \mathcal{V}$ be a linear operator on a pre-euclidean space $(\mathcal{V}, \langle \cdot , \cdot \rangle)$. Then for a fixed basis $\B := \{e_j\}_{j\in J}$ of ${\mathcal V}$ it holds that 
\begin{equation*}
{\mathcal V}=\bigperp _{i\in I}U_i,
\end{equation*}
being each $U_i$ a linear subspace of ${\mathcal V}$ admitting $\B_{[i]} := \{e_j : e_j \in [e_i]\}$ as a basis inherited from $\B$.
Also, we have $$f=\sum_{i\in I}f_i,$$ being each $f_i$ a linear operator on the pre-euclidean space $(\mathcal{V}, \langle \cdot , \cdot \rangle)$, for $i\in I$, such that
\begin{equation*}
f_i|_{U_i} = f|_{U_i}, \hspace{2cm} f_i(U_i) \subset U_i, \hspace{2cm} 
f_i(\perp_{i\neq j} U_j)=0.
\end{equation*}
Further, for each $i\in I$, $U_i$ is $f_i$-indecomposable with respect to $\B_{[i]}$.
\end{theorem}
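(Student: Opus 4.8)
The plan is to obtain everything by reading off the connected components of $\Gamma(f,\B)$ provided by \eqref{graphdec}. I would take $I := V/\!\sim$ as index set (equivalently, fix one representative $e_i$ per class) and set $U_i := \mathcal{V}_{\C_{[e_i]}} = \bigperp_{e_j \in [e_i]}\IF e_j$ as in \eqref{linsub}, with $\B_{[i]} := \{e_j : e_j \in [e_i]\} \subset \B$ the inherited basis. Since $\B = \dot{\bigcup}_{i\in I}\B_{[i]}$ is a basis of $\mathcal{V}$ partitioned by the equivalence classes, the identity $\mathcal{V} = \bigoplus_{i\in I}U_i$ (finitely supported sums) is immediate. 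For orthogonality, if $i\neq j$ and $e_r \in \B_{[i]}$, $e_s\in\B_{[j]}$, then $e_r\not\sim e_s$, so $(e_r,e_s)\notin E$ and $(e_s,e_r)\notin E$; by Definition \ref{def_graph_associated} this forces $\langle e_r,e_s\rangle=\langle e_s,e_r\rangle=0$, and bilinearity gives $\langle U_i,U_j\rangle=\{0\}$. Hence $\mathcal{V}=\bigperp_{i\in I}U_i$.

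Next I would check $f(U_i)\subset U_i$: for $e_r\in\B_{[i]}$ write $f(e_r)=\sum_j\lambda_j e_j$; whenever $\lambda_j\neq 0$, Definition \ref{def_graph_associated} puts $(e_r,e_j)\in E$, so $e_j\sim e_r$, i.e. $e_j\in[e_i]$, whence $f(e_r)\in U_i$. Using the decomposition $\mathcal{V}=\bigoplus_{j\in I}U_j$, write each $v\in\mathcal{V}$ uniquely as $v=\sum_j v_j$ with $v_j\in U_j$ (only finitely many nonzero), and define $f_i:\mathcal{V}\to\mathcal{V}$ by $f_i(v):=f(v_i)$. Then $f_i$ is linear, $f_i|_{U_i}=f|_{U_i}$, $f_i(U_i)=f(U_i)\subset U_i$, and $f_i(U_j)=0$ for $j\neq i$, so $f_i(\perp_{j\neq i}U_j)=0$; moreover, for fixed $v$ only finitely many $f_i(v)$ are nonzero, so $\sum_{i\in I}f_i(v)=f\bigl(\sum_i v_i\bigr)=f(v)$, i.e. $f=\sum_{i\in I}f_i$.

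Finally, for the $f_i$-indecomposability of $U_i$ with respect to $\B_{[i]}$, I would observe that since $f_i|_{U_i}=f|_{U_i}$ takes values in $U_i$, the graph $\Gamma(f_i|_{U_i},\B_{[i]})$ is exactly the subgraph of $\Gamma(f,\B)$ induced on the vertex set $[e_i]$, which is the connected component $\C_{[e_i]}$ and hence connected; Theorem \ref{f-ind} (equivalently Corollary \ref{f-inde}) then yields that $U_i$ is $f_i$-indecomposable with respect to $\B_{[i]}$. I expect no serious obstacle: the statement is essentially a repackaging of the component decomposition \eqref{graphdec} together with the orthogonality-and-invariance argument already used to prove Theorem \ref{f-ind}. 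The only point requiring a little care is the arbitrary-dimensional bookkeeping — reading $\bigoplus_{i}U_i$ and $\sum_i f_i$ as finitely supported, so that no convergence issue arises — and the routine verification that each $f_i$ is well defined and linear.
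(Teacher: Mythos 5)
Your proposal is correct and follows essentially the same route as the paper: define $U_i := \mathcal{V}_{\C_{[e_i]}}$ from the connected components, verify orthogonality and $f$-invariance via Definition \ref{def_graph_associated}, and define $f_i$ through the projections onto the components. The only (harmless) difference is at the end, where you invoke Theorem \ref{f-ind} applied to the induced subgraph $\C_{[e_i]}$ — which is exactly what Corollary \ref{f-inde} records — whereas the paper re-runs the path argument of Theorem \ref{f-ind} explicitly; your shortcut is valid since $f_i|_{U_i}=f|_{U_i}$ maps $U_i$ into $U_i$, so the associated graph on $\B_{[i]}$ is precisely the connected component.
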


\begin{proof}
Let $V/\sim =\{[e_i]\}_{i\in I}$. Then, from Equations \eqref{graphdec} and \eqref{linsub}, we can assert that $\mathcal{V} := \bigoplus_{i\in I } U_i$ is the direct sum of the family of linear subspaces $U_i  := \mathcal{V}_{\mathcal{C}_{[e_i]}}$ and $\B_{[i]} = \{e_j : e_j \in [e_i]\}$ is the basis for $U_i$, with $i\in I$. So if $U_i \neq U_j$ then $[e_i]\neq [e_j]$ and hence $\{\langle x,y\rangle ,\langle y,x \rangle \}=\{0\}$ for any $x\in [e_i]$, $y \in [e_j]$. From here, it follows that $\langle U_i,U_j \rangle = \langle U_j,U_i \rangle = 0$ for any $i\neq j$. That is, $\mathcal{V} = \bigperp_{i \in I} U_i$.

Assume now that $P_{U_j}\left(f(U_i)\right) \neq \{0\}$ for $i \neq j$, where $P_{U_j} : \mathcal{V} \to U_j$ is the projection of $\mathcal{V}$ onto $U_j$. Then there exist $e' \in [e_i]$ and $e'' \in [v_j]$ such that $P_{U_j}(f(e')) = \lambda e''+u$ with $\lambda \in \mathbb{F}\setminus \{0\}$ and $u \in \bigoplus_{e \neq e'' \in [e_j]} \mathbb{F}e \subset U_j$. Since $\lambda \neq 0$, we have $(e',e'') \in E$ and therefore $e' \sim e''$, i.e. $[e'] = [e'']$. As $[e_i] = [e']$ and $[e_j] = [e'']$, we have $[e_i] = [e_j]$ and so $U_i = U_j$, a contradiction. Thus $P_{\perp_{i\neq j}U_j}\bigl( f(U_i)\bigr) = 0$ and we conclude that $f(U_i) \subset U_i$. Consequently, each linear subspace $U_i$ of $\mathcal{V}$ admits a basis $\B_{[i]}$ inherited from $\B$.

Since $\mathcal{V}=\perp _{i\in I}U_i$, for each $i \in I$ we define the linear operator $f_i : \mathcal{V} \to \mathcal{V}$ as $f_i(U_i) := f(U_i)$ and $f_i(\bigperp_{i\neq j}U_j) := \{0\}$, so $f=\sum_{i\in I}f_i$.

Now, let us show that each $U_i$ is $f_i$-indecomposable  with respect to $\B_{[i]}$. We assume that $$U_i = U_1' \perp U_2',$$ where $U_1'$ and $U_2'$ are non-zero linear subspaces of $U_i$ admitting the $f_i$-basis $\B_1 := \{e_j : j \in J\}$ and $\B_2 := \{e_k : k \in K\}$ inherited from $\B_{[i]}$, respectively. That is, $$\B_{[i]} = \B_1 \; \dot{\cup} \; \B_2.$$ Fix some $e_j \in \B_1$ and $e_k \in \B_2$. Since $e_j$ is connected to $e_k$, there exists an undirected path $$(e_j,v_{i_2},\dots,v_{i_{n-1}},e_k)$$ from $e_j$ to $e_k$. From here, there are $e' = v_{i_s} \in \B_1$ and $e'' = v_{i_{s+1}} \in \B_2$ such that either $(e',e'') \in E$ or $(e'',e') \in E$. Since $\langle e',e'' \rangle = \langle e'',e'\rangle = 0$, we have either $P_{\mathbb{F}e''}(f_i(e')) \neq 0$ or $P_{\mathbb{F}e'}(f_i(e'')) \neq 0$. Therefore, we have either $f_i(U_1') \not\subset U_1'$ or $f_i(U_2') \not\subset U_2'$. In both cases, it is a contradiction and the proof is completed.
\end{proof}

\begin{example} \rm
Let $\mathcal{V}$ the $6$-dimensional $\mathbb{F}$-vector space with basis $\B:=\{e_1,e_2,\dots,e_6\}$ and bilinear form defined as $$\langle e_1, e_2 \rangle := \alpha, \hspace{2cm} \langle e_5, e_6 \rangle := \beta,$$ and the rest zero, with $\alpha,\beta \in \mathbb{F} \setminus \{0\}$. We consider the linear operator $f : \mathcal{V} \to \mathcal{V}$ given as $$f(e_1):=\alpha e_1, \hspace{2cm} f(e_3):= \beta e_2, \hspace{2cm} f(e_4):=-\beta e_4, \hspace{2cm} f(e_5):= -\alpha e_5+\alpha e_6,$$ and zero on the rest, with $\alpha,\beta$ the same previous scalars. By denoting $U_1,U_2,U_3$ the linear subspaces of $\mathcal{V}$ with bases $\B_{[1]} := \{e_1,e_2,e_3\}$, $\B_{[2]} :=\{e_4\}$, $\B_{[3]} := \{e_5,e_6\}$, respectively, we have $\mathcal{V}$ is $f$-decomposable   with respect to $\B$, since $${\mathcal V} = U_1 \perp U_2 \perp U_3.$$ The associated graph $\Gamma(f,\B)$ is:

\bigskip

\begin{center}
\begin{tikzpicture}\label{GRAPH1C}[scale=0.8, transparency group=knockout]
\begin{scope}[every node/.style={circle,thick,draw}]
\node[shape=circle,fill={rgb,255:gray,300; white,50},draw=black] (E1) at (-0.5,2) {$e_1$};
\node[shape=circle,fill={rgb,255:gray,300; white,50},draw=black] (E2) at (-1.5,0) {$e_2$};
\node[shape=circle,fill={rgb,255:gray,300; white,50},draw=black] (E3) at (0.5,0) {$e_3$};
\node[shape=circle,fill={rgb,255:gray,300; white,50},draw=black] (E4) at (2.5,0) {$e_4$};
\node[shape=circle,fill={rgb,255:gray,300; white,50},draw=black] (E5) at (4.5,0) {$e_5$};
\node[shape=circle,fill={rgb,255:gray,300; white,50},draw=black] (E6) at (6.5,0) {$e_6$};

\draw[<->] (E1) edge (E2);
\draw[->] (E3) edge (E2);
\draw[<->] (E5) edge (E6);

\path (E1) edge [loop right] (E1);
\path (E4) edge [loop above] (E4);
\path (E5) edge [loop above] (E5);
\end{scope}
\end{tikzpicture}
\end{center}

\bigskip

\noindent Also, for $i \in \{1,2,3\}$, we  define $f_i : {\mathcal V} \to {\mathcal V}$ as $f_1(e_1) := \alpha e_1$, $f_1(e_3):=\beta e_2$, $f_2(e_4):=-\beta e_4$, $f_3(e_5) := -\alpha e_5 + \alpha e_6$ and zero on the rest (being  $\alpha,\beta$ the same previous non-zero scalars). We get that $$f=f_1+f_2+f_3$$ satisfies the condition of Theorem \ref{f-indecomposable}, then $U_i$ is $f_i$-indecomposable with respect to $\B_{[i]}$, for $i \in \{1,2,3\}$. So obviously, ${\mathcal V}$ is $f$-decomposable    with respect to $\B$.
\end{example}

\noindent To identify the components of the decomposition given in Theorem \ref{f-indecomposable} we only need to focus on the connected components of the associated graph.

\section{Relating the graphs given by different choices of bases}\label{Sect3}

In general, for a linear operator $f : \mathcal{V} \to \mathcal{V}$ on a pre-euclidean space $(\mathcal{V},\langle \cdot, \cdot \rangle )$, two different bases of $\mathcal{V}$ determine associated graphs not isomorphic, which can give rise to different decomposition of $f$ as in Theorem \ref{f-indecomposable}. We recall that two graphs $(V,E)$ and $(V',E')$ are {\em isomorphic} if there exists a bijection $\phi : V \to V'$ such that $(v_i,v_j) \in E$ if and only if $(\phi(v_i),\phi(v_j)) \in E'$. For instance, we show the next example.

\begin{example}\rm
For the linear operator and the pre-euclidean space of Example \ref{new_exam}, if we consider $w_1:=v_1+v_2$, $w_2:=v_1-v_2$, $w_3:=v_4+v_5$, $w_4:=v_4-v_5$ and $w_5:=v_3$, for the basis $\B':=\{w_1,w_2,w_3,w_4,w_5\}$ we get $$\langle w_1,w_3 \rangle = -\langle w_1,w_4 \rangle = \langle w_2,w_3 \rangle = -\langle w_2,w_4 \rangle = \langle w_3,w_5 \rangle = \langle w_4,w_5 \rangle =1,$$
$$\langle w_3,w_1 \rangle = -\langle w_3,w_2 \rangle = \langle w_4,w_1 \rangle = -\langle w_4,w_2 \rangle = \langle w_5,w_3 \rangle = -\langle w_5,w_4 \rangle = 3,$$ and also $$f(w_1) = 4w_1+2w_5, \hspace{1cm} f(w_3) = f(w_4) = \frac{1}{2} w_3 +\frac{1}{2}w_4, \hspace{1cm} f(w_5)=2w_1+w_5.$$

\bigskip

\noindent So we obtain the associated graph $\Gamma(f,\B')$ as:

\bigskip

\begin{center}
\begin{tikzpicture}[scale=0.8, transparency group=knockout]
\begin{scope}[every node/.style={circle,thick,draw}]
\node[shape=circle,fill={rgb,255:gray,300; white,50},draw=black] (E+F) at (8,4) {$w_1$};
\node[shape=circle,fill={rgb,255:gray,300; white,50},draw=black] (H2) at (10,4) {$w_5$};
\node[shape=circle,fill={rgb,255:gray,300;white,50},draw=black] (E-F) at (12,4) {$w_2$};
\node[shape=circle,fill={rgb,255:gray,300;white,50},draw=black] (P+Q) at (8,0) {$w_3$};
\node[shape=circle,fill={rgb,255:gray,300;white,50},draw=black] (P-Q) at (12,0) {$w_4$};

\draw[<->] (E+F) edge (H2);
\draw[<->] (P+Q) edge (P-Q);
\draw[<->] (E+F) edge (P+Q);
\draw[<->] (E+F) edge (P-Q);
\draw[<->] (E-F) edge (P+Q);
\draw[<->] (E-F) edge (P-Q);
\draw[<->] (P+Q) edge (H2);
\draw[<->] (P-Q) edge (H2);

\path (E+F) edge [loop left] (E+F);
\path (H2) edge [loop above] (H2);
\path (P+Q) edge [loop left] (P+Q);
\path (P-Q) edge [loop right] (P-Q);

\end{scope}
\end{tikzpicture}
\end{center}

\bigskip

\noindent Clearly, $\Gamma(f,\B')$ is not isomorphic to the associated graph $\Gamma(f,\B)$ stated in Example \ref{new_exam}.
\end{example}

Next we give a condition under which the graphs associated to a linear operator $f : \mathcal{V} \to \mathcal{V}$ on a pre-euclidean space $(\mathcal{V},\langle \cdot, \cdot \rangle )$, performed by two different bases, are isomorphic. As a consequence, we establish a sufficient condition under which two decomposition of $f$, induced by two different bases, are equivalent.

\begin{definition}\rm
Let $f : \mathcal{V} \to \mathcal{V}$ be a linear operator on a pre-euclidean space $(\mathcal{V}, \langle \cdot , \cdot \rangle)$. Two bases $\B=\{v_i\}_{i \in I}$ and $\B'=\{w_i\}_{i \in I}$ of $\mathcal{V}$ are $f$-{\em equivalent} if there exists an automorphism $\phi : \mathcal{V} \to \mathcal{V}$ satisfying $\phi(\B)=\B'$ and $\phi \circ f = f \circ \phi$.
\end{definition}

\begin{lemma}\label{isomorphism}
Let $f : \mathcal{V} \to \mathcal{V}$ be a linear operator on a pre-euclidean space $(\mathcal{V}, \langle \cdot , \cdot \rangle)$ with basis $\mathcal{B}=\{e_i\}_{i \in I}$. Consider two bases $\B$ and $\B'$ of $\mathcal{V}$. If $\B$ and $\B'$ are $f$-equivalent bases then the associated graphs $\Gamma(f,\B)$ and $\Gamma(f,\B')$ are isomorphic.
\end{lemma}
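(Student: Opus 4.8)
The plan is to show that an $f$-equivalence automorphism $\phi:\mathcal{V}\to\mathcal{V}$ with $\phi(\B)=\B'$ induces a graph isomorphism $\bar\phi:\Gamma(f,\B)\to\Gamma(f,\B')$ in the obvious way. Writing $\B=\{e_i\}_{i\in I}$, define $\bar\phi$ on vertices by $\bar\phi(e_i):=\phi(e_i)$; since $\phi$ is a bijection sending $\B$ onto $\B'$, this is a bijection $V\to V'$, so it only remains to check the edge condition in both directions. The edge set of $\Gamma(f,\B)$ is the union of two parts — the ``bilinear part'' $\{(e_i,e_j):\{\langle e_i,e_j\rangle,\langle e_j,e_i\rangle\}\neq\{0\}\}$ and the ``operator part'' $\{(e_i,e_j): \text{the }e_j\text{-coordinate of }f(e_i)\text{ is nonzero}\}$ — and I would verify that $\phi$ preserves each part separately.

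First I would handle the bilinear part. Because $\phi$ is a morphism of pre-euclidean spaces, $\langle \phi(x),\phi(y)\rangle=\langle x,y\rangle$ for all $x,y$, hence in particular $\langle \phi(e_i),\phi(e_j)\rangle=\langle e_i,e_j\rangle$ and $\langle\phi(e_j),\phi(e_i)\rangle=\langle e_j,e_i\rangle$; therefore $\{\langle e_i,e_j\rangle,\langle e_j,e_i\rangle\}\neq\{0\}$ if and only if $\{\langle\phi(e_i),\phi(e_j)\rangle,\langle\phi(e_j),\phi(e_i)\rangle\}\neq\{0\}$, which is exactly the statement that $(e_i,e_j)$ is a bilinear-part edge of $\Gamma(f,\B)$ iff $(\phi(e_i),\phi(e_j))$ is one of $\Gamma(f,\B')$. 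Second, for the operator part, I would use the intertwining relation $\phi\circ f=f\circ\phi$: if $f(e_i)=\sum_k\lambda_k e_k$ with the $\lambda_k$ the coordinates of $f(e_i)$ in $\B$, then applying $\phi$ gives $f(\phi(e_i))=\phi(f(e_i))=\sum_k\lambda_k\phi(e_k)$, so $\lambda_k$ is also the $\phi(e_k)$-coordinate of $f(\phi(e_i))$ in the basis $\B'$ (here one uses that $\phi$ is linear and that $\{\phi(e_k)\}=\B'$ is a basis, so coordinates are well defined and transported exactly). Hence the $e_j$-coordinate of $f(e_i)$ is nonzero iff the $\phi(e_j)$-coordinate of $f(\phi(e_i))$ is nonzero, i.e. $(e_i,e_j)$ is an operator-part edge of $\Gamma(f,\B)$ iff $(\phi(e_i),\phi(e_j))$ is one of $\Gamma(f,\B')$.

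Combining the two parts: $(e_i,e_j)\in E$ if and only if $(\bar\phi(e_i),\bar\phi(e_j))\in E'$, which together with bijectivity of $\bar\phi$ is precisely the definition of an isomorphism of directed graphs. This gives $\Gamma(f,\B)\cong\Gamma(f,\B')$.

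I do not expect a genuine obstacle here; the only point needing a little care is the bookkeeping in the operator part — making sure that the coordinates of $f(e_i)$ relative to $\B$ really do match the coordinates of $f(\phi(e_i))$ relative to $\B'$, which hinges on $\phi$ being a linear \emph{iso}morphism (so it carries the basis $\B$ to the basis $\B'$ and commutes with taking linear combinations) and on the intertwining identity $\phi f=f\phi$. A subtlety worth a sentence is that the definition of the edge set quantifies ``$f(e_i)=\sum_j\lambda_j e_j$ for some $0\neq\lambda_j$'': this should be read as ``the coefficient $\lambda_j$ in the (unique) expansion of $f(e_i)$ in $\B$ is nonzero,'' and it is in this reading that the transport under $\phi$ is exact.
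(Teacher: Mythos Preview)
Your proof is correct and follows essentially the same approach as the paper's own: both use that $\phi$ restricts to a bijection $\B\to\B'$, invoke the pre-euclidean morphism identity $\langle\phi(x),\phi(y)\rangle=\langle x,y\rangle$ for the bilinear-form edges, and apply the intertwining relation $\phi\circ f=f\circ\phi$ to transport coordinates for the operator edges. The only cosmetic difference is that you argue the ``if and only if'' for each edge type simultaneously, whereas the paper proves one direction and then invokes $\phi^{-1}$ for the converse.
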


\begin{proof}
Let us suppose that $\B=\{v_i\}_{i\in I}, \B'=\{w_i\}_{i\in I}$ are two $f$-equivalent bases of $\mathcal{V}$. Then there exists an automorphism $\phi : \mathcal{V} \to \mathcal{V}$ satisfying $f \circ \phi = \phi \circ f$ and
\begin{equation}\label{eqq1}
\langle x,y \rangle = \langle \phi(x),\phi(y) \rangle    
\end{equation}
for $x,y \in \mathcal{V}$, in such way that for every $v_i \in \B$ there exists an unique $w_{j_i} \in \B'$ verifying $\phi(v_i)= w_{j_i}$.

Let us denote by $(V,E)$ and $(V',E')$ the set of vertices and edges of $\Gamma(f,\B)$ and $\Gamma(f,\B')$, respectively. Taking into account that $V=\B$ and $V'=\B'$, and the fact $\phi(\B)=\B'$, we have that $\phi$ defines a bijection from $V$ to $V'$.
Given $v_i,v_k \in V$, we want to show that $(v_i,v_k) \in E$ if and only if $(\phi(v_i),\phi(v_k)) = (w_{j_i},w_{j_k}) \in E'$. Suppose that $(v_i,v_k)\in E$, thus either 
$\{\langle v_i,v_k \rangle , \langle v_k,v_i \rangle \} \neq \{0\}$ or $ f(v_i) = \sum_k \lambda_k v_k \mbox{ for some  } 0 \neq \lambda_k \in \mathbb{F}$.
If $\{\langle v_i,v_k \rangle , \langle v_k,v_i \rangle \} \neq \{0\}$ then by Equation \eqref{eqq1} we have
$\{\langle w_{j_i}, w_{j_k} \rangle , \langle w_{j_k},w_{j_i} \rangle \} \neq \{0\}$. If $f(v_i) = \sum_k \lambda_k v_k \mbox{ for some  } 0 \neq \lambda_k \in \mathbb{F}$, applying $\phi$ to this relation we get that $f(w_{j_i}) = f  (\phi(v_i)) = \phi (f(v_i)) = \phi (\sum_k \lambda_k v_k)=\sum_k \lambda_k \phi(v_k)=\sum_k \lambda_k w_{j_k} \mbox{ for some } 0 \neq \lambda_k \in \mathbb{F} $. Thus, $(w_{j_i},w_{j_k}) \in E'$. The same argument using $\phi^{-1}$ shows that if $(w_{j_i}, w_{j_k}) \in E'$ then $(v_i,v_k) \in E$, because also $\phi^{-1}\circ f = f \circ \phi^{-1}$. This fact concludes the proof that $\Gamma(f,\B)$ and $\Gamma(f,\B')$ are isomorphic via $\phi$.
\end{proof}

The following concept is borrowed from the theory of graded algebras (see for instance \cite{f4}).

\begin{definition}\rm
Let $(\mathcal{V},\langle \cdot ,\cdot \rangle)$ be a pre-euclidean space and let $$\Upsilon := \mathcal{V} = \bigperp_{i \in I} \mathcal{V}_i \hspace{0.4cm} \mbox{\rm and} \hspace{0.4cm} \Upsilon' := \mathcal{V} = \bigperp_{j \in J} \mathcal{V}'_j$$ be two decomposition of $\mathcal{V}$ as an orthogonal direct sum of linear subspaces. It is said that $\Upsilon$ and $\Upsilon'$ are {\em equivalent} if there exists an automorphism $\phi : \mathcal{V} \to \mathcal{V}$, and a bijection $\sigma: I \to J$ such that $\phi(\mathcal{V}_i) = \mathcal{V}'_{\sigma(i)}$ for all $i \in I$.
\end{definition}

\begin{theorem}\label{Teorema___3.1}
Let $f : \mathcal{V} \to \mathcal{V}$ be a linear operator on a pre-euclidean space $(\mathcal{V}, \langle \cdot , \cdot \rangle)$. Then for two bases $\B:=\{v_i\}_{i \in I}$ and $\B':=\{v_j'\}_{j \in J}$ of $ \mathcal{V} $ consider the following assertions:
\begin{itemize}
\item[i.] The bases $\B$ and $\B'$ are $f$-equivalent.
\item[ii.] The graphs $\Gamma(f,\B)$ and $\Gamma(f,\B')$ are isomorphic.
\item[iii.] The decomposition of the linear operator $f : \mathcal{V} \to \mathcal{V}$ with respect to $\B$      $$\Upsilon := \mathcal{V} = \bigperp_{[v_i] \in V/\sim} \mathcal{V}_{\C_{[v_i]}},$$ 
given by $$f=\sum_{i \in I}f_i$$ with $f_i|_{\mathcal{V}_{\C_{[v_i]}}} = f|_{\mathcal{V}_{\C_{[v_i]}}}$, $f_i(\mathcal{V}_{\C_{[v_i]}}) \subset \mathcal{V}_{\C_{[v_i]}}$, $f_i(\perp _{i\neq k} \mathcal{V}_{\C_{[v_k]}})=0,$ and the decomposition of $f$ with respect to $\B'$
$$\Upsilon' := \mathcal{V} = \bigperp_{[v'_j] \in V'/\sim} \mathcal{V}_{\C'_{[v'_j]}},$$ 
performed by $$f=\sum_{j \in J}f_j'$$ with $f'_j|_{\mathcal{V}_{\C'_{[v'_j]}}} = f|_{\mathcal{V}_{\C'_{[v'_j]}}}$, $f'_j(\mathcal{V}_{\C'_{[v'_j]}}) \subset \mathcal{V}_{\C'_{[v'_j]}}$, $f'_j(\perp _{i\neq k} \mathcal{V}_{\C'_{[v'_k]}})=0,$
are equivalent.
\end{itemize}
Then i. implies ii. and  iii.   
\end{theorem}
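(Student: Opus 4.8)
The plan is to prove the two implications separately, using Lemma~\ref{isomorphism} for the first and Theorem~\ref{f-indecomposable} together with the $f$-equivalence automorphism for the second.

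\textbf{i.\ implies ii.} This is immediate: it is precisely the content of Lemma~\ref{isomorphism}. If $\B$ and $\B'$ are $f$-equivalent then the witnessing automorphism $\phi$ sends $\B$ bijectively onto $\B'$ and commutes with $f$, hence by the lemma it induces a graph isomorphism $\Gamma(f,\B)\cong\Gamma(f,\B')$. So nothing new needs to be done here beyond citing the lemma.

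\textbf{i.\ implies iii.} Let $\phi:\mathcal V\to\mathcal V$ be the automorphism witnessing the $f$-equivalence, so $\phi(\B)=\B'$ and $\phi\circ f=f\circ\phi$. First I would observe that, writing $\phi(v_i)=w_{j_i}$, the map $v_i\mapsto w_{j_i}$ is the graph isomorphism $\Gamma(f,\B)\to\Gamma(f,\B')$ produced in the proof of Lemma~\ref{isomorphism}; in particular it carries connected components to connected components, so it induces a bijection $\sigma:I\to J$ with $\phi([v_i])=[v'_{\sigma(i)}]$ as subsets of vectors, i.e.\ $\phi$ maps the basis $\B_{[i]}$ of $\mathcal V_{\C_{[v_i]}}$ onto the basis $\B'_{[\sigma(i)]}$ of $\mathcal V_{\C'_{[v'_{\sigma(i)}]}}$. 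Consequently $\phi(\mathcal V_{\C_{[v_i]}})=\mathcal V_{\C'_{[v'_{\sigma(i)}]}}$ for every $i\in I$. Together with the fact that $\phi$ is an automorphism of the pre-euclidean space (so it preserves the bilinear form and hence orthogonality of the summands), this is exactly the definition of $\Upsilon$ and $\Upsilon'$ being equivalent decomposition of $\mathcal V$ as orthogonal direct sums. I would also remark that the commutation $\phi\circ f=f\circ\phi$ guarantees compatibility with the operator pieces, namely $\phi\circ f_i=f'_{\sigma(i)}\circ\phi$, which makes the equivalence an equivalence of the decomposition of the operator $f$ and not merely of the space; this follows by checking the defining identities of $f_i$ and $f'_{\sigma(i)}$ on the summands $\mathcal V_{\C_{[v_k]}}$ and using $\phi(\mathcal V_{\C_{[v_k]}})=\mathcal V_{\C'_{[v'_{\sigma(k)}]}}$.

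The only genuinely delicate point is the passage from "$\phi$ is a graph isomorphism" to "$\phi$ maps each component subspace onto a component subspace''. This requires knowing that a graph isomorphism sends each connected component bijectively onto a connected component; that is a standard fact about graphs, since $\phi$ preserves the edge relation in both directions and hence preserves the induced relation $\sim$ of being joined by an undirected path. Once that is in place, everything else is a matter of unwinding the definitions of $\mathcal V_{\C_{[v_i]}}$ in \eqref{linsub}, of orthogonal direct sum, and of equivalent decomposition, all of which are routine. Thus the main obstacle is essentially bookkeeping: making sure the bijection $\sigma$ on index sets, the component-wise action of $\phi$, and the operator identities $f_i$ versus $f'_{\sigma(i)}$ are all simultaneously coherent.
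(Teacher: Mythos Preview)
Your proposal is correct and follows essentially the same approach as the paper: for i.\,$\Rightarrow$\,ii.\ you cite Lemma~\ref{isomorphism}, and for i.\,$\Rightarrow$\,iii.\ you use that the graph isomorphism $\phi$ sends connected components to connected components, hence $\phi(\mathcal V_{\C_{[v_i]}})=\mathcal V_{\C'_{[v'_{\sigma(i)}]}}$, which is exactly the paper's argument. Your additional remark on the operator compatibility $\phi\circ f_i=f'_{\sigma(i)}\circ\phi$ goes slightly beyond what the paper proves (the paper's notion of equivalent decompositions concerns only the subspaces, not the operator pieces), but it is a correct and natural observation.
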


\begin{proof}
The implication from  i.\ to ii.\ was proved in Lemma \ref{isomorphism}. Let us prove the implication from  i.\ to iii. Suppose that $\phi : \mathcal{V} \to \mathcal{V}$ is an automorphism satisfying $\phi(\B)=\B'$ and $\phi \circ f = f \circ \phi$. By the implication from  i.\ to ii., we know that 
$\Gamma(f,\B)$ and $\Gamma(f,\B')$ are isomorphic via $\phi$ and thus $\phi([v])=[\phi(v)]$, for all $v\in V=\B$. It follows that $\phi(\mathcal{V}_{\C_{[v]}}) = \mathcal{V}_{\C_{[\phi(v)]}}$, for all $[v]\in V/\sim$, which proves that the decomposition of $\mathcal{V}$ corresponding to $\B$ and $\B'$ are equivalent.
\end{proof}

\begin{remark}\rm
In general, the implication ii. to i. of Theorem \ref{Teorema___3.1} (as well the converse of Lemma \ref{isomorphism}) is not valid. That is, the fact that associated graphs respect two bases are isomorphic does not imply that these two bases are $f$-equivalent. Let $\mathcal{V}$ be a pre-euclidean space with basis $\B :=\{v_1,v_2,v_3\}$ and endowed with a bilinear form defined as $$\langle v_1,v_1 \rangle = \langle v_2,v_2 \rangle = \langle v_3,v_3 \rangle = \langle v_1,v_2 \rangle :=1$$ and zero on the rest. By denoting  $w_1:=v_1+v_2$, $w_2:=v_1-v_2$ and $w_3:=v_3$ we consider the basis $\B':=\{w_1,w_2,w_3\}$, so we obtain $$\langle w_1,w_1 \rangle =3, \hspace{1cm} \langle w_2,w_2 \rangle = \langle w_3,w_3 \rangle =\langle w_2,w_1 \rangle = 1, \hspace{1cm} \langle w_1,w_2 \rangle = -1.$$ Therefore, for a zero linear operator $f$, the associated graphs $\Gamma(f,\B)$ and $\Gamma(f,\B')$ are isomorphic:

\bigskip

\begin{center}
\begin{tikzpicture}[scale=0.8, transparency group=knockout]
\begin{scope}[every node/.style={circle,thick,draw}]
\node[shape=circle,fill={rgb,255:gray,300; white,50},draw=black] (E3) at (-1,0) {$v_3$};
\node[shape=circle,fill={rgb,255:gray,300; white,50},draw=black] (E1) at (-2,-2) {$v_1$};
\node[shape=circle,fill={rgb,255:gray,300; white,50},draw=black] (E2) at (0,-2) {$v_2$};

\node[shape=circle,fill={rgb,255:gray,300; white,50},draw=black] (V3) at (4,0) {$w_3$};
\node[shape=circle,fill={rgb,255:gray,300; white,50},draw=black] (V1) at (3,-2) {$w_1$};
\node[shape=circle,fill={rgb,255:gray,300; white,50},draw=black] (V2) at (5,-2) {$w_2$};

\draw[->] (E1) edge (E2);
\draw[->] (E2) edge (E1);
\draw[->] (V1) edge (V2);
\draw[->] (V2) edge (V1);

\path (E1) edge [loop left] (E1);
\path (E3) edge [loop left] (E3);
\path (E2) edge [loop right] (E2);
\path (V1) edge [loop left] (V1);
\path (V3) edge [loop left] (V3);
\path (V2) edge [loop right] (V2);
\end{scope}
\end{tikzpicture}
\end{center}

\bigskip

\noindent However, $\B $ and $ \B'$  are not $f$-equivalent. Indeed, if there exists an isomorphism $\phi : \mathcal{V} \to \mathcal{V}$ such that $\phi(\B)=\B'$ we get, for instance, $$\phi(v_1):=w_1, \hspace{1cm} \phi(v_2):=w_3, \hspace{1cm} \phi(v_3):=w_2,$$ but $0=\langle v_1, v_3 \rangle \neq \langle \phi(v_1),\phi(v_3) \rangle = \langle w_1,w_2 \rangle = -1.$
\end{remark}


\section{Characterization of the minimality and weak symmetry}\label{Sect4}

Let $(V,E)$ be a graph. Given $v_i,v_j \in V$, we say that a {\em directed path} from $v_i$ to $v_j$ is a sequence of vertices $(v_{i_1},\dots,v_{i_n})$ satisfying $v_{i_1} =v_i$, $v_{i_n} =v_j$ and such that $(v_{i_{r}},v_{i_{r+1}})\in E$, for $1\le r \le n-1$. We also say that $(V,E)$ is {\em symmetric} if $(v_i,v_j) \in E$ for all $(v_j,v_i)\in E$. So we present the next (weaker) concept as follows.

\begin{definition}\label{Def_weakly_symmetric}\rm
A graph $(V,E)$ is {\em weakly symmetric} if for any $(e_j,e_i)\in E$ there exists a directed path from $e_i$ to $e_j$.
\end{definition}

\noindent Of course,  every symmetric graph is weakly symmetric.

\begin{example}\rm
The following graphs are  weakly symmetric:

\bigskip

\begin{center}
\begin{tikzpicture}
	\SetGraphUnit{1.3}
	\SetVertexNormal[FillColor = gray!50,
					MinSize    = 12pt]
	\SetVertexNoLabel
	\Vertices{circle}{A,B,C,D}
	\tikzset{EdgeStyle/.style={->,> = latex'}}
	\Edges(A,B,C,D,A)
\end{tikzpicture}
\qquad
\begin{tikzpicture}
	\SetGraphUnit{1.3}
	\SetVertexNormal[FillColor = gray!50,
					MinSize    = 12pt]
	\SetVertexNoLabel
	\Vertices{circle}{A,B,D,F}
	\Vertex[x=0,y=0]{G}
	\tikzset{EdgeStyle/.style={->,> = latex'}}
	\Edges(A,B,G,F,D,G,A)
\end{tikzpicture}
\qquad
\begin{tikzpicture}
	\SetGraphUnit{1.3}
	\SetVertexNormal[FillColor = gray!50,
					MinSize    = 12pt]
	\SetVertexNoLabel
	\Vertices{circle}{A,C,D,E,F}
	\Vertex[x=0,y=0]{G}
	\tikzset{EdgeStyle/.style={->,> = latex'}}
	\Edges(A,G,C,D,G,E,F,G,A)
\end{tikzpicture}
\end{center}
\end{example}

\begin{definition}\rm
Let $f : \mathcal{V} \to \mathcal{V}$ be a linear operator on a pre-euclidean space $(\mathcal{V}, \langle \cdot , \cdot \rangle)$ with basis $\B$.
We say $\mathcal{V}$ is {\it minimal} if the unique pre-euclidean subspaces $U$ admitting a inherited basis from $\B$ such that $f(U)\subset U$ are $\{0\},\mathcal{V}$.
\end{definition}

\begin{theorem}\label{semisimplicity}
Let $f : \mathcal{V} \to \mathcal{V}$ be a linear operator on a pre-euclidean space $(\mathcal{V}, \langle \cdot , \cdot \rangle)$ with basis $\B=\{e_i\}_{i \in I}$. If $\mathcal{V}$ is minimal, then  the associated graph $\Gamma(f,\B)$ to $f$ relative to $\B$ is weakly symmetric.
\end{theorem}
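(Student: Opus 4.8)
The plan is to prove the contrapositive: if $\Gamma(f,\B)$ is not weakly symmetric, then $\mathcal{V}$ is not minimal, i.e. we exhibit a pre-euclidean subspace $U$ with $\{0\} \neq U \neq \mathcal{V}$, admitting a basis inherited from $\B$, such that $f(U) \subset U$. So suppose there exist vertices $e_j, e_i \in \B$ with $(e_j, e_i) \in E$ but no directed path from $e_i$ to $e_j$. The natural candidate is the set of all vertices reachable from $e_i$ by a directed path (including $e_i$ itself); call this set $R \subset \B$, and let $U := \bigoplus_{e \in R} \mathbb{F}e$. By construction $e_i \in R$, so $U \neq \{0\}$; and $e_j \notin R$ since there is no directed path from $e_i$ to $e_j$, so $U \neq \mathcal{V}$. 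It remains to check $f(U) \subset U$, and that $U$ is a pre-euclidean subspace (which is automatic, since every linear subspace of $\mathcal{V}$ inherits the restricted bilinear form, and $U$ has the inherited basis $\{e : e \in R\}$).

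The key step is $f$-invariance of $U$. Take any $e \in R$ and write $f(e) = \sum_k \lambda_k e_k$. For each $k$ with $\lambda_k \neq 0$, Definition \ref{def_graph_associated} gives $(e, e_k) \in E$. Since $e \in R$, there is a directed path from $e_i$ to $e$; appending the edge $(e, e_k)$ yields a directed path from $e_i$ to $e_k$, hence $e_k \in R$. Therefore $f(e) \in U$ for every basis vector $e$ of $U$, so $f(U) \subset U$, and $\mathcal{V}$ is not minimal. Contrapositively, minimality of $\mathcal{V}$ forces $\Gamma(f,\B)$ to be weakly symmetric.

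The only subtlety worth flagging is the exact reading of the edge condition in Definition \ref{def_graph_associated}: the condition ``$f(e_i) = \sum_j \lambda_j e_j$ for some $0 \neq \lambda_j$'' should be understood as ``there exists an index $j$ with nonzero coefficient $\lambda_j$,'' so that a nonzero coefficient of $e_k$ in $f(e)$ genuinely produces the edge $(e, e_k) \in E$; with that reading the argument above is complete. One may also note that this is precisely where one uses that the reachable set is closed under ``following edges forward,'' which is exactly the defining property of $R$ — no path-length bound or finiteness of $I$ is needed, since reachability is defined by the existence of a finite path regardless of the dimension of $\mathcal{V}$.
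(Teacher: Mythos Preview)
Your proof is correct and follows essentially the same strategy as the paper: both arguments hinge on the observation that the set of vertices reachable by directed paths from a fixed vertex spans an $f$-invariant subspace with an inherited basis, which by minimality must be all of $\mathcal{V}$. The only cosmetic differences are that you frame the argument as a contrapositive and you dispense with the paper's (harmless but unnecessary) preliminary case split on whether the edge arises from the bilinear form or from $f$.
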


\begin{proof}
If $\mathcal{V}$ is minimal we have that ${\mathcal V}$ is the unique non-zero pre-euclidean subspace that satisfies $f(\mathcal{V}) \subset \mathcal{V}$.
Let $\Gamma(f,\B):=(V,E)$ be the associated graph and take some $(e_i,e_j)\in E$. Therefore, either $\langle e_i,e_j \rangle \neq  0 $ or $ \langle e_j,e_i\rangle \neq  0 $ or $f(e_i) = \sum_j \lambda_j e_j$ for some $0 \neq \lambda_j \in \mathbb{F}$. The first two cases imply $(e_j,e_i) \in E$. In the last case, we have $f \neq 0$. Let us now define $$\B_j :=\{e_k \in \B : e_k = e_j \mbox{ or there exists a directed path from } e_j \mbox{ to } e_k\}.$$ Since $e_j \in \B_j$, we have $\B_j \neq \emptyset$. So let $U$ be the space spanned by $\B_j$. Let $e_t \in \B_j$ and $f(e_t) = \sum_m \lambda_m e_m$. If $\lambda_m \neq 0$ we get $(e_t,e_m) \in E$ and hence there exists a directed path from $e_j$ to $e_m$:

\bigskip

\begin{center}
\begin{tikzpicture}[scale=0.8, transparency group=knockout]
\begin{scope}[every node/.style={circle,thick,draw}]
\node[shape=circle,fill={rgb,255:gray,300; white,50},draw=black] (EJ) at (0,0) {$e_j$};
\node[shape=circle,fill=white,draw=white] (BLANC) at (2,0) {$\cdots$};
\node[shape=circle,fill=white,draw=white] (BLANC3) at (3,0) {$\cdots$};
\node[shape=circle,fill=white,draw=white] (BLANC2) at (4,0) {$\cdots$};
\node[shape=circle,fill={rgb,255:gray,300; white,50},draw=black] (ET) at (6,0) {$e_t$};
\node[shape=circle,fill={rgb,255:gray,300; white,50},draw=black] (EM) at (8,0) {$e_m$};

\draw[->] (EJ) edge (BLANC);
\draw[->] (BLANC2) edge (ET);
\draw[->] (ET) edge (EM);
\end{scope}
\end{tikzpicture}
\end{center}

\bigskip
\noindent So it implies $e_m\in \B_j$ and therefore $f(U)\subset U$. Since $\B_j \neq \emptyset$ and ${\mathcal V}$ is minimal we have $U=\mathcal{V}$, so we conclude  $\B_j = \B$ and then $e_i \in \B_j$. Thus there exists a directed path from $e_j$ to $e_i$ as required.
\end{proof}
 
\begin{corollary}
Let $f : \mathcal{V} \to \mathcal{V}$ be a linear operator on a pre-euclidean space $(\mathcal{V}, \langle \cdot , \cdot \rangle)$ with basis $\B=\{e_i\}_{i \in I}$. If $\mathcal{V}$ is minimal, then the associated graph $\Gamma(f,\B)$ to $f$ relative to $\B$ is connected.    
\end{corollary}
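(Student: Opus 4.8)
The corollary is an immediate consequence of Theorem~\ref{semisimplicity} together with Theorem~\ref{f-ind}. The strategy is to show that minimality forces $\mathcal V$ to be $f$-indecomposable with respect to $\B$, and then invoke Theorem~\ref{f-ind} to conclude that $\Gamma(f,\B)$ is connected.

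**Key steps.** First I would recall that, by definition of minimality, the only pre-euclidean subspaces $U$ admitting a basis inherited from $\B$ with $f(U)\subset U$ are $\{0\}$ and $\mathcal V$. Second, suppose for contradiction that $\mathcal V$ is $f$-decomposable with respect to $\B$; then $\mathcal V = U_1 \perp U_2$ with $U_1, U_2$ non-zero linear subspaces, each admitting a basis inherited from $\B$, and $f(U_1)\subset U_1$, $f(U_2)\subset U_2$. In particular $U_1$ is a pre-euclidean subspace (with the restricted bilinear form) admitting an inherited basis, it is $f$-invariant, and $U_1 \neq \{0\}$, $U_1 \neq \mathcal V$ (the latter because $U_2\neq\{0\}$ and the sum is direct). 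This contradicts minimality. Hence $\mathcal V$ is $f$-indecomposable with respect to $\B$, and by Theorem~\ref{f-ind} the graph $\Gamma(f,\B)$ is connected.

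**Alternative route and obstacle.** One could also argue directly from Theorem~\ref{semisimplicity}: that theorem already gives weak symmetry of $\Gamma(f,\B)$, and the proof there in fact constructs, for any vertex $e_j$, an $f$-invariant inherited-basis subspace $U$ spanned by $\B_j$ and shows $U=\mathcal V$, whence $\B_j=\B$; doing this for two vertices $e_i$ and $e_j$ in different connected components produces $\B_i = \B = \B_j$ with $e_i,e_j$ connected by a directed path, contradicting their being in different components. However, the cleanest and shortest route is the indecomposability argument above, and I do not anticipate any genuine obstacle — the only point requiring a line of care is checking that the $f$-invariant subspace produced by a decomposition is a proper non-zero pre-euclidean subspace with an inherited basis, which is immediate from the definitions of $f$-decomposable and of pre-euclidean subspace.
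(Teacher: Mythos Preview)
Your proposal is correct and follows essentially the same route as the paper: the paper's proof is the one-line statement ``This is an immediate consequence of Corollary~\ref{f-inde}, Theorem~\ref{f-ind} and Theorem~\ref{f-indecomposable}'', which amounts to exactly your argument that minimality forces $f$-indecomposability (else a summand $U_1$ of the orthogonal decomposition would be a proper non-zero $f$-invariant inherited-basis subspace), and then Theorem~\ref{f-ind} gives connectedness. Your presentation is in fact slightly more economical, since you only need Theorem~\ref{f-ind} together with the definitions, rather than invoking the full decomposition Theorem~\ref{f-indecomposable}; the reference to Theorem~\ref{semisimplicity} in your plan is unnecessary.
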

\begin{proof}
    This is an immediate consequence of Corollary \ref{f-inde}, Theorem \ref{f-ind} and Theorem \ref{f-indecomposable}.
\end{proof}
\begin{remark}\rm
In general, the converse of the previous results are not valid. As counterexample, let $(\mathcal{V},\langle \cdot ,\cdot \rangle )$ be the pre-euclidean space over $\mathbb{R}$ with basis $\B := \{e_1, e_2, e_3, e_4\}$ and bilinear form given as $$\langle e_1,e_2 \rangle =\langle e_2,e_3 \rangle =\langle e_3,e_4 \rangle = \langle e_4,e_1 \rangle := 1.$$ Let $f : \mathcal{V} \to \mathcal{V}$ be the linear operator defined as $f(e_i) := e_i$ for $i \in \{1,2,3,4\}$. Then $\mathcal{V}$ is not minimal since, for instance, the pre-euclidean subspace $U$ with inherited basis $\{e_1\}$ from $\B$ satisfies $f(U)\subset U$. However, the associated graph $\Gamma(f,\B)$ is  clearly connected and weakly symmetric:

\begin{center}
\begin{tikzpicture}[scale=0.8, transparency group=knockout]
\begin{scope}[every node/.style={circle,thick,draw}]
\node[shape=circle,fill={rgb,255:gray,300; white,50},draw=black] (V1) at (0,4) {$e_1$};
\node[shape=circle,fill={rgb,255:gray,300; white,50},draw=black] (V2) at (2,2) {$e_2$};
\node[shape=circle,fill={rgb,255:gray,300; white,50},draw=black] (V3) at (0,0) {$e_3$};
\node[shape=circle,fill={rgb,255:gray,300; white,50},draw=black] (V4) at (-2,2) {$e_4$};

\draw[<->] (V1) edge (V2);
\draw[<->] (V2) edge (V3);
\draw[<->] (V3) edge (V4);
\draw[<->] (V4) edge (V1);

\path (V1) edge [loop above] (V1);
\path (V2) edge [loop right] (V2);
\path (V3) edge [loop below] (V3);
\path (V4) edge [loop left] (V4);
\end{scope}
\end{tikzpicture}
\end{center}
\end{remark}

\section{Conclusion}
\label{sec0}
We were motivated by the following situation: If we consider a family  of linear operators $$\{f_i : \mathcal{V}_i \to \mathcal{V}_i\}_{i\in I},$$ where any $V_i$ is a pre-euclidean space, then we can construct in a natural way a new linear operator $$f: \bigoplus_{i \in I} \mathcal{V}_i \to \bigoplus_{i \in I} \mathcal{V}_i$$ as $f=\sum_{i\in I} f_i$,   where $\mathcal{V} := \bigoplus_{i\in I} \mathcal{V}_i$ is the pre-euclidean space defined by  componentwise operations.

In this paper, our purpose has been to study the converse problem, and we have given a positive answer by proving in Theorem \ref{f-indecomposable} that given a linear operator $f : \mathcal{V} \to \mathcal{V}$ on a pre-euclidean space, it is possible to find a family of pre-euclidean spaces $\{\mathcal{V}_i\}_{i\in I}$, and a family of linear operators $\{f_i : \mathcal{V}_i \to \mathcal{V}_i\}_{i\in I} $ in such a way that $\mathcal{V} = \perp_{i\in I} \mathcal{V}_i$ and $f = \sum_{i\in I} f_i$.
In order to approach our question, we have used techniques of graphs. This allows us to obtain the above decomposition of the pre-euclidean space $\mathcal{V}$ and the linear operator $f$ in a very practical way, just by looking to the graph $\Gamma(F,\B)$ associated to $f$ (and a fixed basis $\B$). Also, the minimality of our structure is characterized in Theorem \ref{semisimplicity}, which says that, 
for a linear operator $f : \mathcal{V} \to \mathcal{V}$ on a minimal pre-euclidean space $(\mathcal{V}, \langle \cdot , \cdot \rangle)$ (with basis $\B=\{e_i\}_{i \in I}$), then  the associated graph $\Gamma(f,\B)$ to $f$ relative to $\B$ is weakly symmetric.

Finally, we would like to note that  future research in this topic are to generalize this result for different classes of operators (not necessarily linear operators) and, of course, consider operators on structures different to pre-euclidean spaces  (for instance Banach spaces).\\

{\bf Acknowledgment.} The authors would like to thank the reviewers
for the comments and suggestions that helped to improve the work.


\begin{thebibliography}{99}

\bibitem{Leibniz_grafos} Barreiro, E; Calder\'on, A.J.; Lopes, S.A.; S\'anchez, J.M. Leibniz algebras and graphs, {\em Linear Multilinear Algebra}. DOI: 10.1080/03081087.2022.2092048

\bibitem{Yografos} Calder\'on, A.J.; Navarro, F.J. Bilinear maps and graphs. {\em Discrete Appl. Math.} {\bf 2019}, 263, 69--78.

\bibitem{apply6} Morales, J.V.S.; Palma, T.M. On quantum adjacency algebras of Doob graphs and their irreducible modules. {\em J. Algebraic Combin.} {\bf 2021}, 54, 979–998.

\bibitem{apply8} Qaralleh, I.; Mukhamedov, F. Volterra evolution algebras and their graphs. {\em Linear Multilinear Algebra} {\bf 2021}, 69, 2228–2244.

\bibitem{apply13} Zhilina, S. Orthogonality graphs of real Cayley-Dickson algebras. Part I: Doubly alternative zero divisors and their hexagons. {\em Internat. J. Algebra Comput.} {\bf 2021}, 31, 663–689.

\bibitem{graph_oper1} Ambrozie, C.; Bra\v{c}i\v{c}, J.; Kuzma, B.; M\"{u}ller, V. The commuting graph of bounded linear operators on a Hilbert space. {\em J. Funct. Anal.} {\bf 2013}, 264, 1068--1087.

\bibitem{graph_oper2} Bailey, S.; Beasley, L.B. Linear operators on graphs which preserve the dot-product dimension. {\em J. Combin. Math. Combin. Comput.} {\bf 2016}, 98, 31--42.

\bibitem{graph_oper3} Beasley, L.B. Linear operators on graphs: genus preservers. {\em Congr. Numer.} {\bf 2018}, 231, 5--13.

\bibitem{graph_oper4} Beasley, L.B.; Brown, D.E. Linear operators on graphs that preserve the clique cover number. {\em Congr. Numer.} {\bf 2015}, 225, 83--94.

\bibitem{graph_oper5} Beasley, L.B.; Pullman, N.J. Linear operators preserving properties of graphs. {\em Proceedings of the Twentieth Southeastern Conference on Combinatorics, Graph Theory, and Computing} (Boca Raton, FL, 1989). {\em Congr. Numer.} {\bf 1990}, 70, 105--112.

\bibitem{graph_oper6} Garber, A.I. Graphs of linear operators. (Russian) {\em Tr. Mat. Inst. Steklova} {\bf 263} (2008), Geometriya, Topologiya i Matematicheskaya Fizika. I, 64–71 ISBN: 5-7846-0108-3; 978-5-7846-0108-7; translation in {\em Proc. Steklov Inst. Math.} {\bf 2008}, 263, 57--64.

\bibitem{graph_oper7} Zeng, S.B.; Cao, J.L. Closed graph properties of set-valued linear operators. (Chinese) {\em J. Tianjin Univ.} {\bf 1994}, 27, 799--803.

\bibitem{apply12} Yang, F.; Sun, Q.; Zhang, C. Semisymmetric Graphs Defined by Finite-Dimensional Generalized Kac–Moody Algebras. {\em Bull. Malays. Math. Sci. Soc.} {\bf 2022}, 45, 3293–3305.

\bibitem{apply1} Belishev, M.I.; Kaplun, A.V. Canonical representation of the $C^*$-algebra of eikonals related to the metric graph. (Russian) {\em Izv. Ross. Akad. Nauk Ser. Mat.} {\bf 2022}, 86, 3--50.

\bibitem{apply2} Chhiti, M.; Kaiba, K. The total graph of amalgamated algebras. {\em Indian J. Math.} {\bf 2022}, 64, 245–261.

\bibitem{apply3} Christoffersen, N.J.; Dutkay, D.E. Representations of Cuntz algebras associated to random walks on graphs. {\em J. Operator Theory} {\bf 2022}, 88, 139--170.

\bibitem{apply5} Matsumoto, K. On simplicity of the $C^*$-algebras associated with $\lambda$-graph systems. {\em J. Math. Anal. Appl.} {\bf 2022}, 515, Paper No. 126441, 20 pp.

\bibitem{apply7} Myers, T. Constructing Clifford algebras for windmill and dutch windmill graphs; a new proof of the friendship theorem. {\em Combinatorics, graph theory and computing}, 47--81, {\em Springer Proc. Math. Stat.}, 388, Springer, Cham, 2022.

\bibitem{apply9} Ramos, E. The graph minor theorem meets algebra. {\em Notices Amer. Math. Soc.} {\bf 2022}, 69, 1297--1305.

\bibitem{apply10} Rastgar, Z.; Khashyarmanesh, K.; Afkhami, M. On the comaximal (ideal) graph associated with amalgamated algebra. {\em Discrete Math. Lett.} {\bf 2023}, 11, 38--45.

\bibitem{apply11} Reis, T.; Cadavid, P. Derivations of evolution algebras associated to graphs over a field of any characteristic. {\em Linear Multilinear Algebra} {\bf 2022}, 70, 2884--2897.

\bibitem{apply4} Larson, D.M. Connectedness of graphs arising from the dual Steenrod algebra. {\em J. Homotopy Relat. Struct.} {\bf 2022}, 17, 145--161.

\bibitem{f4} Elduque, A.;  Kochetov, M.  Gradings on simple Lie algebras. Mathematical Surveys and Monographs, 189. American Mathematical Society, Providence, RI; Atlantic Association for Research in the Mathematical Sciences (AARMS), Halifax, NS, 2013. xiv+336 pp.

\end{thebibliography}
\end{document}